\newcommand{\irr}{\textup{Irr}}
\def\imod#1{\allowbreak\mkern10mu({\operator@font mod}\,\,#1)}
\newtheorem{theorem}{Theorem}[section]
\newtheorem{lemma}[theorem]{Lemma}
\newtheorem{conjecture}{Conjecture}
\newtheorem*{theorem*}{Theorem A}
\theoremstyle{definition}
\numberwithin{equation}{section}
\newcommand{\ignore}[1]{}
\newcommand{\mynote}[1]{}
\newcommand{\genlegendre}[4]{%
	\genfrac{(}{)}{}{#1}{#3}{#4}%
	\if\relax\detokenize{#2}\relax\else_{\!#2}\fi
}
\begin{document}
	\setcounter{section}{0}
	
	\title[Vanishing pairs of conjugacy classes]{Vanishing pairs of conjugacy classes\\ for the symmetric group}
	\author{Velmurugan S}
	\address{The Institute of Mathematical Sciences, Chennai,}
     \address{Homi Bhabha National Institute, Mumbai,}
     \email{velmurugan.math@gmail.com}

\begin{abstract}
    In this short note, we classify pairs of conjugacy classes of the symmetric group such that any non-linear irreducible character of the symmetric group vanishes on at least one of them.
\end{abstract}
\maketitle

\emph{Keywords:} symmetric group, character.

\emph{AMS Subject Classification}: Primary 20C30; Secondary 20C15.

\section{Introduction}
Let $G$ be a finite group.
Let $\irr(G)$ denote the set of all irreducible characters of $G$.
We say that an element $g\in G$ is a vanishing element if there exists an irreducible character $\chi$ of $G$ such that $\chi(g)=0$ and, in this case, we call the conjugacy class containing $g$ a vanishing conjugacy class of $G$ and call $g$ as 
a zero of $\chi$.
The study of finding zeros of irreducible characters of $G$ goes back to Burnside who proved~\cite[Theorem 3.15]{Isaacs_I_Martin} that any non-linear irreducible character (i.e., $\chi(1)>1$) of $G$ vanishes on some element of $G$.

It is natural to ask that whether there is an element $g$ in $G$ such that every non-linear irreducible character vanishes on $g$.
It turns out that if so, then the group $G$ must be solvable, see~\cite{Frieder_Ladisch}.
Generally, for the description of a group which has one element where every non-linear irreducible characters vanishes, see~\cite{Mark_Lewis_L}.

For a finite group $G$, let $k(G)$ denote the least positive integer $k$ such that there exist $k$-many conjugacy classes where any non-linear irreducible character vanishes on at least one of them.
Lewis et al.~\cite{Mark_Lucia_Emanuele_Lucia_Hung} conjectured the following:
\begin{conjecture}\label{conjecture:Mark_Lucia_Emanuele_Lucia_Hung}
    Let $G$ be a finite group.
    Then one of the following holds:
    \begin{enumerate}
        \item $k(G)\leq 3$.
        \item If $G$ is solvable, then $k(G)\leq 2$.
    \end{enumerate}
\end{conjecture}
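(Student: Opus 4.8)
\emph{Remark (toward Conjecture~\ref{conjecture:Mark_Lucia_Emanuele_Lucia_Hung}).} The conjecture is open for general finite groups, so rather than a self-contained argument I outline the program I would follow; I read its two clauses as the conjunction that $k(G)\le 3$ for every finite $G$ and $k(G)\le 2$ when $G$ is solvable. The plan is to treat the solvable bound and the general bound separately, attacking each by reduction to (quasi- and almost) simple building blocks and invoking the Classification of Finite Simple Groups (CFSG) for the base cases. The organizing principle is to route all reductions through the generalized Fitting subgroup $F^{*}(G)$, so that the whole group is controlled by a single dominant layer and the small target constants $2$ and $3$ are not inflated by an accumulation of corrections across a long normal series.

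The first ingredient is a reduction lemma describing how $k$ behaves along a normal subgroup $N\trianglelefteq G$. Partition the nonlinear irreducible characters of $G$ into those containing $N$ in their kernel, which are inflations from $G/N$, and the remainder, which lie over nontrivial $G$-orbits in $\irr(N)$ by Clifford theory. For an inflated character $\tilde\chi$ one has $\tilde\chi(g)=\chi(gN)$, so \emph{any} preimage of a class witnessing $k(G/N)$ already forces $\tilde\chi$ to vanish; the real content is to cover the characters lying nontrivially over $N$ using the orbit data of $\Res^{G}_{N}$. The difficulty is that the Clifford correspondence alters both degrees and class data, so I would first establish the lemma in the transparent extreme cases ($N$ central, $N$ a minimal abelian normal subgroup, and $N=F(G)$) and only then assemble them into a statement with an absolute, non-accumulating correction term.

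For the solvable bound the target constant $2$ cannot be reached by an induction that spends a fresh class at each layer, so instead I would exhibit two classes directly from the structure of $G$. The base case $k(G)=1$ is exactly the situation described by Lewis and forced to be solvable by Frieder--Ladisch, so it is available as a guide. In general I would build the pair around $F(G)$: a minimal normal subgroup $N$ is elementary abelian, characters lying nontrivially over $N$ restrict to $N$ as multiples of a single $G$-orbit sum in $\irr(N)$, and a single element $g\in N$ at which all relevant orbit sums vanish annihilates them simultaneously, while a companion class chosen relative to $F(G)$ handles the inflated characters. The delicate point is the orbit theorem for solvable linear groups, which can fail to provide a regular orbit; I would fall back on coprime-action and Hall--Higman type results and dispose of the exceptional modules (small-dimensional or extraspecial configurations) by hand.

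For the general bound $k(G)\le 3$ the base of the reduction is the almost simple case, where CFSG applies. Alternating and symmetric groups are handled by the methods of this note (the $n$-cycle and $(n-1)$-cycle classes already give $k(S_n)\le 2$), sporadic groups and their automorphic extensions are a finite computation in the stored character tables, and the groups of Lie type are approached through Deligne--Lusztig theory: regular semisimple, regular unipotent, and one further class (such as a generator of a maximal torus of suitable order) should between them force a zero on every unipotent character and, via Jordan decomposition, on every irreducible character. I expect \textbf{this last step to be the main obstacle}: producing a single uniform covering by three classes across all Lie families and all ranks demands control of the proliferating Lusztig series as the rank grows, so the known vanishing results (of Malle, Saxl, and others) must be marshalled into one statement, and I would concentrate effort on the small-rank and defining-characteristic cases and on pinning down the third class uniformly.
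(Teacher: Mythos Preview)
The statement you are addressing is a \emph{conjecture} in the paper, not a theorem; the paper offers no proof of it and merely records that Lewis et al.\ established it for finite non-abelian simple groups before moving on to its own result about $S_n$. So there is no ``paper's own proof'' to compare your proposal against.

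You correctly recognize this and present a research program rather than a proof. That is the honest thing to do, and your outline (reduction via $F^{*}(G)$, Clifford-theoretic control of characters over a normal subgroup, CFSG for the almost-simple base case) is a plausible line of attack. But as a proof it is, by your own admission, incomplete: the reduction lemma you describe is not stated precisely, the solvable case hinges on an orbit-theoretic step you flag as delicate and do not carry out, and the Lie-type case is left at the level of ``should between them force a zero.'' None of this is a mistake so much as an acknowledgment that the conjecture remains open. If the assignment was to reproduce the paper's argument for this labeled statement, the correct answer is simply that the paper gives none.
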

They proved the conjecture for finite non-abelian simple groups.
Furthermore, they showed that, for the symmetric group $S_n$, $k(S_n)=2$.
In this note, we classify such pairs of conjugacy classes of $S_n$.
For every partition $\mu$ of $n$, let $C_\mu$ denote the conjugacy class of $S_n$ consisting of permutations with cycle type $\mu$.
\begin{theorem*}
    Let $n$ be a positive integer greater than $6$.
    Let $\mu$ and $\nu$ be partitions of $n$.
    Then any non-linear irreducible character vanishes on at least one of $C_\mu$ or $C_\nu$ if and only if $\{\mu,\nu\}=\{(n),(n-1,1)\}$.
    Equivalently, for any two conjugacy classes $C_\mu$ and $C_\nu$ of $S_n$, there exists a non-linear irreducible character $\chi$ of $S_n$ such that $\chi(w_\mu)\neq 0$ and $\chi(w_\nu)\neq 0$ if and only if $\{\mu,\nu\}\neq \{(n),(n-1,1)\}$.
\end{theorem*}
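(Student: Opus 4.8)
The plan is to prove the two directions separately, and for the forward (``only if'') implication to split according to whether one of the two classes is the class $C_{(n)}$ of $n$-cycles. Throughout I write $\chi^{\lambda}$ for the irreducible character indexed by a partition $\lambda$, so that the non-linear characters are exactly those with $\lambda\notin\{(n),(1^{n})\}$, and I may assume $\mu\neq\nu$, since for $\mu=\nu$ the claim reduces to the existence of a single non-linear character nonzero on $C_{\mu}$, which is immediate from $k(S_n)=2>1$.

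The reverse implication rests on two Murnaghan--Nakayama inputs. First, $\chi^{\lambda}(w_{(n)})\neq 0$ if and only if $\lambda$ is a hook $(n-k,1^{k})$, with value $(-1)^{k}$: evaluating on an $n$-cycle forces removal of a single border strip of size $n$, which is possible only for hooks. Second, I evaluate $\chi^{(n-k,1^{k})}(w_{(n-1,1)})$ by stripping the $1$-cycle first; for $1\leq k\leq n-2$ the two removable corners of $(n-k,1^{k})$ give the hooks $(n-1-k,1^{k})$ and $(n-k,1^{k-1})$ of size $n-1$, contributing $(-1)^{k}$ and $(-1)^{k-1}$, whence $\chi^{(n-k,1^{k})}(w_{(n-1,1)})=0$. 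Thus every non-linear hook vanishes on $C_{(n-1,1)}$ and every non-hook vanishes on $C_{(n)}$, so $\{(n),(n-1,1)\}$ is a vanishing pair.

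For the forward implication the engine when $\mu=(n)$ is the hook generating function
\[
\sum_{k=0}^{n-1}\chi^{(n-k,1^{k})}(w_{\nu})\,t^{k}=\frac{\prod_{j}\bigl(1-(-t)^{\nu_{j}}\bigr)}{1+t},
\]
obtained from $\chi^{(n-k,1^{k})}=\chi_{\wedge^{k}V}$ for $V$ the standard module, reading $\det(1+t\,w_{\nu})$ on $V$ off the eigenvalues of $w_{\nu}$ on the permutation module via $\prod_{d=0}^{m-1}(1+t\zeta_{m}^{d})=1-(-t)^{m}$. Since all non-linear hooks are nonzero on $C_{(n)}$, it suffices to exhibit $1\leq k\leq n-2$ with nonzero $t^{k}$-coefficient. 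Writing $m_{i}=m_{i}(\nu)$ for the number of parts equal to $i$, the coefficient of $t^{1}$ is $m_{1}-1$, nonzero unless $m_{1}=1$; and if $m_{1}=1$ the right-hand side is $\prod_{\nu_{j}\geq 2}(1-(-t)^{\nu_{j}})$, whose lowest non-constant term lies in a degree $2\leq p\leq n-2$ (for $n>6$) with coefficient $\pm m_{p}\neq 0$ as soon as at least two parts exceed $1$. The only exception is a single part exceeding $1$, i.e. $\nu=(n-1,1)$. Hence if $(n)\in\{\mu,\nu\}$ the pair is vanishing exactly when $\{\mu,\nu\}=\{(n),(n-1,1)\}$.

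The main obstacle is the case in which neither class is $C_{(n)}$. Here I would work with the low-degree characters, whose values read (with $m_{i}=m_{i}(\sigma)$) $\chi^{(n-1,1)}=m_{1}-1$, $\chi^{(n-2,1,1)}=\binom{m_{1}-1}{2}-m_{2}$ and $\chi^{(n-2,2)}=\binom{m_{1}}{2}-m_{1}+m_{2}$; the standard character $\chi^{(n-1,1)}$ is a common witness whenever both classes have $m_{1}\neq 1$. The difficulty concentrates on classes with exactly one fixed point, where $\chi^{(n-2,2)}=m_{2}-1$ and $\chi^{(n-2,1,1)}=-m_{2}$, and the identity $\chi^{(n-2,2)}-\chi^{(n-2,1,1)}=2m_{2}-1$ (always odd, hence nowhere zero) shows these two never vanish together and so disposes of many such pairs. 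The genuinely hard part, which I expect to be the crux, is pairs of ``one–fixed–point'' classes whose small-cycle multiplicities are mismatched, and in particular any pair involving $C_{(n-1,1)}$ itself: since the generating function gives $1+(-1)^{n}t^{n-1}$ with no middle term, \emph{no} non-linear hook survives on $C_{(n-1,1)}$, forcing the construction of a common non-hook (near-hook) character by a direct Murnaghan--Nakayama analysis organized by the multiplicities $m_{1},m_{2},m_{3},\dots$, with the hypothesis $n>6$ invoked precisely to rule out the low-dimensional coincidences in $S_{5}$ and $S_{6}$ where the classification fails.
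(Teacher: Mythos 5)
Your reverse direction is correct, and your treatment of the sub-case where one class is $C_{(n)}$ is complete and in fact slicker than the paper's: the generating function $\sum_k \chi^{(n-k,1^k)}(w_\nu)t^k=\det(1+tw_\nu|_V)=\prod_j\bigl(1-(-t)^{\nu_j}\bigr)/(1+t)$ handles all hooks at once, where the paper grinds through the recursion $\chi_{(n-k,1^k)}=\chi_{(n-k)}\times\chi_{(1^k)}\uparrow-\chi_{(n-k+1,1^{k-1})}$ case by case. But there is a genuine gap: in the main case, where neither class is $C_{(n)}$, you explicitly stop at a plan (``a direct Murnaghan--Nakayama analysis organized by the multiplicities''), and that is where essentially all of the work lies. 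Your low-degree observations do not close it. The vanishing of $\chi^{(n-1,1)}$ on at least one class forces only that \emph{one} of $\mu,\nu$ has $m_1=1$; the other can have any $m_1$, so the difficulty is not confined to pairs of one-fixed-point classes (the paper's cases 5.1, 6.1, 7 have $m_1(\mu)\in\{2,3\}$, and 5.2, 6.2 have $m_1(\nu)=0$). Likewise, the oddness of $\chi^{(n-2,2)}-\chi^{(n-2,1,1)}=2m_2-1$ only prevents those two characters from vanishing simultaneously on a \emph{single} class; it says nothing when the surviving character differs between the two classes (e.g.\ $m_1(\mu)=m_1(\nu)=1$, $m_2(\mu)=0$, $m_2(\nu)=1$: then $\chi^{(n-2,2)}$ survives only on $\mu$ and $\chi^{(n-2,1,1)}$ only on $\nu$). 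Most seriously, at the level of multiplicity bookkeeping nothing constrains how $\mu$ and $\nu$ may differ in their \emph{large} parts, while the near-hook characters you propose to use only see small-part multiplicities, so there is no visible way for your induction to terminate.

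The idea you are missing is the paper's structural reduction, which is what makes the case analysis finite. Schur's second orthogonality, applied under the hypothesis that every non-linear character vanishes on one of the two classes, gives $1+\chi_{(1^n)}(w_\mu)\chi_{(1^n)}(w_\nu)=\delta_{\mu\nu}|Z_{S_n}(w_\mu)|$, forcing $\mu\neq\nu$ and $\sgn(w_\mu)\sgn(w_\nu)=-1$; then Navarro's class-multiplication formula shows the structure constant $a_{\mu\nu}^{(2)}$ on the transposition class is strictly positive, i.e.\ one may choose representatives with $w_\mu w_\nu=(1\,2)$. A short disjoint-cycle argument then proves that $\mu$ and $\nu$ agree except that two parts $a,b$ of $\mu$ merge into a single part $a+b$ of $\nu$. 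Only after this reduction does the multiplicity analysis you sketch become a bounded case analysis on $(a,b)$ --- which is the bulk of the paper's proof and is entirely absent from your proposal. Without $a_{\mu\nu}^{(2)}>0$ (or some substitute controlling the large parts), your plan for the main case cannot be carried out as stated. (Your handling of $\mu=\nu$ by citing $k(S_n)=2$ from the literature is legitimate, though the paper instead derives $|Z_{S_n}(w_\mu)|=2$ from orthogonality and rules it out directly.)
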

The above theorem turns out to be a counterpart of the following theorem of Hung, Moret\'o and Morotti.
\begin{theorem}{\cite[Theorem A]{Hung}}
    Let $n>8$ be a positive integer.
    Then for non-linear characters $\chi_\lambda$ and $\chi_\nu$ of $S_n$, they have a common zero if and only if  $\{\chi_\lambda(1),\chi_\mu(1)\}\neq\{n(n-3)/2, (n-1)(n-2)/2\}$.
\end{theorem}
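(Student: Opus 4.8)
The plan is to fix notation from the hook and near-hook calculus of $S_n$ and to treat the two directions separately, the forward implication being short and the converse requiring a case analysis. Throughout I write $\chi^\lambda$ for the irreducible character indexed by $\lambda\vdash n$, I let $m_i(\mu)$ denote the number of parts of $\mu$ equal to $i$, and I use the Murnaghan--Nakayama rule as the main computational tool. The key bookkeeping device will be the identity
\[
\sum_{k=0}^{n-1}\chi^{(n-k,1^{k})}(w_\mu)\,t^{k}=\frac{1}{1+t}\prod_{j}\bigl(1-(-t)^{\mu_j}\bigr),
\]
which records all hook character values at once and follows from $\wedge^{k}$ of the standard representation being the hook module $S^{(n-k,1^{k})}$ together with the eigenvalues of a permutation matrix acting on the natural module.

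For the forward implication I would first show that the non-linear characters not vanishing on $C_{(n)}$ are exactly the non-linear hooks $\chi^{(n-k,1^{k})}$, $1\le k\le n-2$: by Murnaghan--Nakayama a character is nonzero on an $n$-cycle only if its diagram is a single rim hook, that is, a hook. I would then check that every such hook vanishes on $C_{(n-1,1)}$: removing an $(n-1)$-rim hook from $(n-k,1^{k})$ must leave the one-box partition $(1)$, but the skew shape $(n-k,1^{k})/(1)$ splits into a disconnected arm and leg once $1\le k\le n-2$, so no admissible rim hook exists and the value is $0$. Combining the two facts, every non-linear character is zero on $C_{(n)}$ or on $C_{(n-1,1)}$, which is the forward direction.

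For the converse I would organize the argument by the number of fixed points. Since $\chi^{(n-1,1)}(w_\mu)=m_1(\mu)-1$, the standard character resolves every pair with $m_1(\mu)\neq1\neq m_1(\nu)$, and it is also nonzero on $C_{(n)}$. Reading the coefficients off the identity above, the term at $t^{1}$ equals $m_1(\mu)-1$, while when $m_1(\mu)=1$ and $\mu\neq(n-1,1)$ the coefficient at the smallest part $s\ge2$ of $\mu$ is nonzero with $s\le n-2$; hence for any $\mu\notin\{(n),(n-1,1)\}$ some non-linear hook survives on $C_\mu$. As every hook survives on $C_{(n)}$, this settles all pairs containing $(n)$, and it settles a pair of $m_1=1$ classes as soon as they share their smallest part $\ge2$. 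The delicate remaining input is the class $(n-1,1)$ itself, on which no non-linear hook survives. Here I would establish, again by Murnaghan--Nakayama, that the non-linear characters nonzero on $C_{(n-1,1)}$ are precisely the near-hooks $\chi^{(p,2,1^{q})}$ with $p\ge2$ and $p+q=n-2$ (these are exactly the $\lambda$ for which $\lambda/(1)$ is a single border strip).

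The main obstacle is then to show that for every class $\nu\notin\{(n),(n-1,1)\}$ at least one near-hook is nonzero on $C_\nu$, and more generally to close the residual pairs in which both classes have exactly one fixed point. My approach would be to compute a short list of low-degree values explicitly, for example $\chi^{(n-2,2)}(w_\mu)=\binom{m_1(\mu)}{2}-m_1(\mu)+m_2(\mu)$ and $\chi^{(n-2,1,1)}(w_\mu)=\binom{m_1(\mu)-1}{2}-m_2(\mu)$, and to combine them with further near-hook evaluations, splitting into cases according to the pair $(m_1(\nu),m_2(\nu))$ and the smallest part of $\nu$. I expect the Murnaghan--Nakayama bookkeeping for the near-hook values $\chi^{(p,2,1^{q})}(w_\nu)$ to be the technical heart of the proof, and I would discharge the finitely many small or degenerate configurations, which is where the hypothesis $n>6$ enters, by direct inspection of character values.
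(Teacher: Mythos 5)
Your proposal does not prove the statement in question. The statement is Hung--Moret\'o--Morotti's theorem about two non-linear \emph{characters} having a common zero, i.e.\ a single permutation at which both character values vanish, with the exceptional pair of degrees $\{n(n-3)/2,(n-1)(n-2)/2\}$ realized by $(n-2,2)$, $(n-2,1,1)$ and their conjugates. What you have sketched instead is the dual statement --- the paper's own main theorem classifying pairs of \emph{conjugacy classes} $\{C_{(n)},C_{(n-1,1)}\}$ such that every non-linear character vanishes on at least one of them. The two results are counterparts (characters and classes exchange roles), but neither implies the other. The telltale signs: your argument never mentions the exceptional degrees or the partitions $(n-2,2)$ and $(n-2,1,1)$; your ``forward implication'' establishes that every non-linear character vanishes on $C_{(n)}$ or on $C_{(n-1,1)}$, which is the `only if' half of the paper's Theorem A and says nothing about common zeros of two characters; and you invoke the hypothesis $n>6$ (the bound in the paper's main theorem), whereas the statement assumes $n>8$. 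Note also that the paper does not prove this statement at all --- it quotes it from the cited work for comparison --- so even read as a proof of the paper's own theorem, your sketch is a partial re-derivation of Section 2, and it omits the paper's key structural step: the parity argument plus the class-multiplication lemma showing $a_{\mu\nu}^{(2)}>0$, which forces $\mu$ and $\nu$ to differ by merging two parts and is what makes the ensuing case analysis finite. Your substitute (matching smallest parts of classes with $m_1=1$) leaves the pairs with distinct smallest parts unresolved.

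For the statement actually posed, the easy half can be done with the explicit values already listed in the paper: $\chi_{(n-2,2)}(w_\mu)+\chi_{(n-2,1,1)}(w_\mu)=m_1(\mu)^2-3m_1(\mu)+1$, which is never zero for integral $m_1$ (the roots are $(3\pm\sqrt{5})/2$), so these two characters cannot vanish simultaneously; tensoring with $\sgn$ preserves zero sets and so handles the conjugate partitions $(2,2,1^{n-4})$ and $(3,1^{n-3})$ of the same degrees. The genuinely hard half is the converse: for \emph{every other} pair of non-linear characters one must exhibit a class on which both vanish, and nothing in your sketch --- the generating function for hook values, or near-hook evaluations at the two fixed classes $C_{(n)}$ and $C_{(n-1,1)}$ --- is aimed at constructing such common zeros. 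That construction is the bulk of the cited paper and requires different techniques (vanishing at classes adapted to both partitions simultaneously), so the proposal as written has no route to the stated theorem.
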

For more results and conjectures on the zeros of irreducible characters of finite groups, we refer the reader to the survey article~\cite{MR0002127}.
\section{Proof of the theorem}
Let $n$ be a positive integer greater than $10$ (for $n\leq 10$, we can verify the theorem by direct computation).
Let $S_n$ be the symmetric group on $n$ letters.
We recall that the irreducible characters of $S_n$ are indexed by partitions of $n$.
For a partition $\lambda=(\lambda_1,\lambda_2,\dotsc,\lambda_r)$ of $n$, let $\chi_\lambda$ denote the irreducible character of $S_n$ indexed by $\lambda$.
Let $w_\lambda$ be a representative of the conjugacy class $C_\lambda$.

Firstly, we shall prove the 'only if' part of the theorem.
The proof uses Murnaghan-Nakayama rule~\cite[2.4.7]{James_Kerber} which is as follows.
\begin{lemma}[Murnaghan-Nakayama rule]
    \label{lemma:Murnaghan_Nakayama}
Let $\lambda,\mu$ be partitions of $n$.
Then
\begin{displaymath}
    \label{eq:Murnaghan_Nakayama}
    \chi_\lambda(w_\mu)=\sum\limits_{b\in \lambda : h(b)=\mu_t} (-1)^{\text{ht}(\text{rim}_b)-1} \chi_{\lambda-\text{rim}_b}(w_{\bar{\mu}_t}),
\end{displaymath}
where the summation varies over all the cells $b$ of the Young diagram of $\lambda$ with hook length $h_b$ equal to a fixed part $\mu_t$ of $\mu$, $\text{rim}_b$ is the rim hook of $b$, $\text{ht}(\text{rim}_b)$ is the number of rows $\text{rim}_b$ contains and $\bar{\mu}_t$ is the partition obtained from $\mu$ by removing the part $\mu_t$.
\end{lemma}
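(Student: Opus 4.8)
The plan is to deduce the rule from the Frobenius characteristic theory of symmetric functions rather than to argue directly with representations. First I would recall the Hall inner product $\langle\,\cdot\,,\cdot\,\rangle$ on the ring of symmetric functions, normalized so that $\langle p_\mu,p_\nu\rangle=z_\mu\,\delta_{\mu\nu}$ and so that the Schur functions $\{s_\lambda\}$ form an orthonormal basis. The fundamental theorem of Frobenius is that the transition between the power-sum and Schur bases records the irreducible characters of $S_n$ exactly, i.e.
\[
    p_\mu=\sum_{\lambda\vdash n}\chi_\lambda(w_\mu)\,s_\lambda,
\]
equivalently $\chi_\lambda(w_\mu)=\langle s_\lambda,p_\mu\rangle$. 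I would take this identity as the starting input.

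The next step is to isolate the chosen part $\mu_t=:r$ of $\mu$. Writing $p_\mu=p_r\,p_{\bar\mu_t}$ and using that the skewing operator $p_r^{\perp}$ is adjoint to multiplication by $p_r$, I obtain
\[
    \chi_\lambda(w_\mu)=\langle s_\lambda,\,p_r\,p_{\bar\mu_t}\rangle=\langle p_r^{\perp}s_\lambda,\;p_{\bar\mu_t}\rangle,
\]
so everything reduces to computing $p_r^{\perp}s_\lambda$ in the Schur basis. Here the crux is the border-strip expansion
\[
    p_r^{\perp}s_\lambda=\sum_{\nu}(-1)^{\mathrm{ht}(\lambda/\nu)-1}\,s_\nu,
\]
the sum running over partitions $\nu\subseteq\lambda$ for which $\lambda/\nu$ is a border strip (a connected skew shape containing no $2\times 2$ block) of size $r$.

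To prove this expansion I would use the Jacobi--Trudi determinant $s_\lambda=\det\bigl(h_{\lambda_i-i+j}\bigr)_{1\le i,j\le \ell}$ together with two facts about $p_r^{\perp}$: that it is a derivation (because $p_r$ is primitive for the standard comultiplication, so $p_r^{\perp}(ab)=(p_r^{\perp}a)\,b+a\,(p_r^{\perp}b)$), and that $p_r^{\perp}h_n=h_{n-r}$ (read off from $\sum_n h_n t^n=\exp(\sum_{k\ge 1}p_k t^k/k)$ and $p_r^{\perp}=r\,\partial/\partial p_r$). Applying the derivation row by row to the determinant produces a sum of determinants, the $i$-th of which is the Jacobi--Trudi determinant with $\lambda_i$ replaced by $\lambda_i-r$; reordering each such sequence back to partition form shows it is either zero or $\pm s_\nu$, and matching the sign of the reordering permutation against the number of rows of the removed strip yields $(-1)^{\mathrm{ht}(\lambda/\nu)-1}$. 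I expect this sign- and vanishing-analysis to be the main obstacle, since it requires translating the determinantal reindexing precisely into the geometry of removing a border strip.

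Finally I would assemble the pieces. Substituting the expansion and using $\langle s_\nu,p_{\bar\mu_t}\rangle=\chi_\nu(w_{\bar\mu_t})$ gives
\[
    \chi_\lambda(w_\mu)=\sum_{\nu}(-1)^{\mathrm{ht}(\lambda/\nu)-1}\,\chi_\nu(w_{\bar\mu_t}).
\]
It then remains only to restate the indexing in the form of the lemma: the removable border strips of size $r=\mu_t$ in $\lambda$ are in bijection with the cells $b$ of $\lambda$ of hook length $h(b)=r$, where the strip is $\mathrm{rim}_b$, the resulting partition is $\nu=\lambda-\mathrm{rim}_b$, and $\mathrm{ht}(\mathrm{rim}_b)$ is its number of rows. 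This reindexing reproduces the stated formula verbatim.
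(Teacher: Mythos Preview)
The paper does not give its own proof of this lemma at all: it simply cites \cite[2.4.7]{James_Kerber} and states the formula, using it as a black box throughout. So there is nothing in the paper to compare your argument against.

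That said, your sketch is a correct and standard route to the Murnaghan--Nakayama rule via symmetric functions. The chain $\chi_\lambda(w_\mu)=\langle s_\lambda,p_\mu\rangle=\langle p_r^{\perp}s_\lambda,p_{\bar\mu_t}\rangle$ is fine, and the heart of the matter is indeed the identity $p_r^{\perp}s_\lambda=\sum_\nu(-1)^{\mathrm{ht}(\lambda/\nu)-1}s_\nu$ over border strips $\lambda/\nu$ of size $r$. Your plan to obtain this from Jacobi--Trudi plus the derivation property of $p_r^{\perp}$ and $p_r^{\perp}h_n=h_{n-r}$ works; the one place that needs genuine care (which you flag) is the bookkeeping showing that after shifting $\lambda_i\mapsto\lambda_i-r$ in one row, the resulting sequence either has a repeated $\lambda_j-j$ value (giving zero) or sorts to a partition $\nu$ with $\lambda/\nu$ a border strip, and that the sign of the sorting permutation is exactly $(-1)^{\mathrm{ht}(\lambda/\nu)-1}$. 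The final reindexing by cells $b$ with $h(b)=r$ is also correct: such cells biject with removable rim hooks of size $r$ via $b\mapsto\mathrm{rim}_b$. For the record, the James--Kerber proof you would be replacing proceeds more representation-theoretically (via induced characters and the branching of cycle types), so your symmetric-function argument is a genuinely different, and arguably cleaner, derivation.
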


Now using Murnaghan-Nakayama rule, we see that $\chi_\lambda$ vanishes on the conjugacy class $C_{(n)}$ except when $\lambda$ is a hook partition (i.e., $\lambda_2\leq 1$).
Again using Murnaghan-Nakayama rule, we see that all irreducible characters $\chi_\lambda$ corresponding to the hook partitions of $n$ vanishes on the conjugacy class $C_{(n-1,1)}$ except for the linear characters $\chi_{(n)}$ and $\chi_{(1^n)}$.
This completes the proof of the 'only if' part of the theorem.  

Now we prove the 'if' part of the theorem.
Let $C_\mu$ and $C_\nu$ be two conjugacy classes (not necessarily distinct) of $S_n$ such that any non-linear irreducible character of $S_n$ vanishes on at least one of them.
By Schur's second orthogonality relations, we have
\begin{equation}\label{eq:Schur_scond}
   1+ \chi_{(1^n)}(w_\mu) \chi_{(1^n)}(w_\nu)= \sum\limits_{\lambda\vdash n} \chi_{\lambda}(w_\mu)\chi_{\lambda}(w_\nu)=\delta_{\mu\nu} |Z_{S_n}(w_\mu)|.
\end{equation}

If $\mu=\nu$, then we would get, using equation~\eqref{eq:Schur_scond}, that $2=|Z_{S_n}(w_\mu)|$.
Recall that $|Z_{S_n}(w_\mu)|=\prod_{i\geq 1} i^{m_i} m_i!$, where $m_i$ is the number of parts of $\mu$ equal to $i$.
Hence, we have $|Z_{S_n}(w_\mu)|=2$ if and only if $\mu=(2,1)$.
Since $n>10$, we must have $\mu\neq \nu$.
We may also conclude that the sign of the permutations $w_\mu$ and $w_\nu$ must be of different parity.

At this point we recall a lemma on product of conjugacy classes.
\begin{lemma}\cite[Corollary 4.14]{Navarro_book}
    \label{lemma:Navarro_product_of_conjugacy_classes}
    Let $G$ be a finite group and $C$ and $D$ be two conjugacy classes of $G$.
    Let $c\in C$ and $d\in D$.
    Then for an element $g\in G$, we have
    \begin{displaymath}
        |\{(x,y)\in C\times D : xy=g\}|=\dfrac{|C||D|}{|G|}\sum\limits_{\chi\in \irr(G)} \dfrac{\chi(c)\chi(d)\overline{\chi(g)}}{\chi(1)}.
    \end{displaymath}
\end{lemma}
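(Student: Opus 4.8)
The plan is to realize the left-hand count as a structure constant for the center of the group algebra $\mathbb{C}[G]$ and then to diagonalize that algebra using the central characters attached to the irreducible representations. First I would fix the conjugacy class $F$ containing $g$ and, for each conjugacy class $E$ of $G$, form the class sum $\widehat{E}=\sum_{x\in E}x\in\mathbb{C}[G]$. These class sums are precisely a $\mathbb{C}$-basis of the center $Z(\mathbb{C}[G])$, so the product $\widehat{C}\,\widehat{D}$ expands uniquely as $\widehat{C}\,\widehat{D}=\sum_E a_{CDE}\,\widehat{E}$, where, comparing coefficients of a fixed $z\in E$, the integer $a_{CDE}$ is exactly $|\{(x,y)\in C\times D: xy=z\}|$. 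Because everything in sight is conjugation-invariant, this count does not depend on the chosen representative $z$, so taking $z=g$ shows that the quantity to be computed is precisely $a_{CDF}$.

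Next I would bring in the central characters. For each $\chi\in\irr(G)$ afforded by a representation $\rho_\chi$, Schur's lemma forces $\rho_\chi(\widehat{E})$ to be a scalar matrix, and taking traces identifies that scalar as $\omega_\chi(\widehat{E})=|E|\chi(e)/\chi(1)$ for any $e\in E$. The resulting map $\omega_\chi\colon Z(\mathbb{C}[G])\to\mathbb{C}$ is an algebra homomorphism, so applying it to the product relation gives
\[
\frac{|C|\chi(c)}{\chi(1)}\cdot\frac{|D|\chi(d)}{\chi(1)}=\omega_\chi(\widehat{C})\,\omega_\chi(\widehat{D})=\sum_E a_{CDE}\,\frac{|E|\chi(e)}{\chi(1)}.
\]
Clearing one factor of $\chi(1)$ yields the family of identities $|C||D|\chi(c)\chi(d)/\chi(1)=\sum_E a_{CDE}\,|E|\chi(e)$, valid for every irreducible $\chi$.

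Finally I would invert this linear system. Multiplying each identity by $\overline{\chi(g)}$ and summing over all $\chi\in\irr(G)$ gives
\[
|C||D|\sum_\chi \frac{\chi(c)\chi(d)\overline{\chi(g)}}{\chi(1)}=\sum_E a_{CDE}\,|E|\sum_\chi \chi(e)\overline{\chi(g)}.
\]
By the second orthogonality (column) relation the inner sum $\sum_\chi \chi(e)\overline{\chi(g)}$ vanishes unless $e$ is conjugate to $g$, in which case it equals $|C_G(g)|$; hence only the term $E=F$ survives and the right-hand side collapses to $a_{CDF}\,|F|\,|C_G(g)|$. Since $|F|\,|C_G(g)|=|G|$ by orbit–stabilizer, dividing through by $|G|$ isolates $a_{CDF}=\tfrac{|C||D|}{|G|}\sum_\chi \chi(c)\chi(d)\overline{\chi(g)}/\chi(1)$, which is exactly the asserted formula.

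The main obstacle here is conceptual rather than computational: one must set up the center $Z(\mathbb{C}[G])$ and justify both that its structure constants are the desired factorization counts and that the central characters $\omega_\chi$ are multiplicative — both of which follow from Schur's lemma. Once that machinery is in place, the second orthogonality relation and the orbit–stabilizer identity perform the extraction mechanically, so no delicate estimation is needed.
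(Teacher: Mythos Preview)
Your argument is correct and is the standard proof of this classical formula (often attributed to Frobenius): interpret the count as a structure constant of $Z(\mathbb{C}[G])$, evaluate the central characters $\omega_\chi(\widehat{E})=|E|\chi(e)/\chi(1)$ on the product $\widehat{C}\,\widehat{D}$, and invert with column orthogonality together with $|F|\,|C_G(g)|=|G|$. Every step is justified as you say.

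As for comparison: the paper does not actually prove this lemma. It is simply quoted with a reference to Navarro's book and used as a black box. So there is no ``paper's own proof'' to compare against; your write-up supplies exactly the argument one would find in the cited source.
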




Let us recall that the $w_\mu$ and $w_\nu$ have different parity.
Also, every non-linear irreducible character of $S_n$ vanishes on at least one of $w_\mu$ or $w_\nu$.
Using Lemma~\ref{lemma:Navarro_product_of_conjugacy_classes}, we have that the coefficient $a_{\mu\nu}^\gamma$ of $\tilde{C}_{\gamma}$ in $\tilde{C_\mu} \tilde{C_\nu}$ is given by
\begin{equation}
    a_{\mu\nu}^{\gamma}=
\begin{cases}
\dfrac{2|C_1||C_2|}{n!} & \text{if $w_\gamma$ is an odd permutation,}\\
0 & \text{otherwise.}
\end{cases}
\end{equation}

In particular, we have $a_{\mu\nu}^{(2)}>0$.
Surprisingly, this positivity of $a_{\mu\nu}^{(2)}$ puts a lot of constraints on the partitions $\mu$ and $\nu$.
Roughly speaking, the partitions $\mu$ and $\nu$ have to be almost equal.
Let $w_\mu$ and $w_\nu$ be the permutations in $S_n$ with cycle type $\mu$ and $\nu$ such that
\begin{equation}\label{eq:mu_nu_(12)}
w_\mu w_\nu =(1~2).
\end{equation}
Let $w_\mu=u_1\dotsc u_r$ and $w_\nu=v_1\dotsc v_m$ be the decomposition of $w_\mu$ and $w_\nu$ into disjoint cycles such that the length of $u_i$ is $\mu_i$ and the length of $v_j$ is $\nu_j$.
Let $u_\mu$ (resp. $v_\mu$) be the product of cycles of $w_\mu$ (resp. $w_\nu$) which contains $1$ or $2$.
Note that $u_\mu$ (resp. $v_\nu$) contains at most two cycles of $w_\mu$ (resp. $w_\nu$).
Let $S_\mu$ (resp. $S_\nu$) be the set of all indices the cycles of $u_\mu$ (resp. $v_\nu$) contain.
We claim that $S_\mu=S_\nu$.
If not, then one of the sets is not contained in the other.
We may start by assuming that $S_\nu$ is not contained in $S_\mu$.
Then there exists an element $t\in S_\nu$ such that $t\notin S_\mu$ and $v_\nu(t)\in S_\mu$.
Since $t\neq 1,2$, we must have $w_\mu w_\nu(t)=t$.
But
\begin{align*}
w_\mu w_\nu(t) &= w_\mu v_\nu(t)\\
&= u_\mu v_\nu(t) \in S_\mu.
\end{align*}
This implies that $w_\mu w_\nu(t)\neq t$ which is a contradiction.
Applying inverse to both sides of the equation~\eqref{eq:mu_nu_(12)} and switching the roles of $\mu$ and $\nu$, we have $S_\mu\subset S_\nu$.
This proves the claim.

Therefore, we have
\begin{align*}
    w_\mu w_\nu &= u_1\dots u_\mu \dots u_r v_1\dots v_\nu \dots v_m\\
    &= w_{\bar\mu} w_{\bar\nu} u_\mu v_\nu\\
    &= w_{\bar\mu} w_{\bar\nu} (1~2)\\
    &= e (1~2)\\
    &= (1~2),
\end{align*}
where $w_{\bar\mu}$ (resp. $w_{\bar\nu}$) is obtained from $w_\mu$ (resp. $w_\nu$) by removing the cycles of $u_\mu$ (resp. $v_\nu$).
Hence, we have $\bar\mu=\bar\nu$, where $\bar\mu$ (resp. $\bar\nu$) is the cycle type of $w_{\bar\mu}$ (resp. $w_{\bar\nu}$).
In particular, we have
\begin{equation}\label{eq:4_cycles}
    u_\mu v_\nu=(1~2).
\end{equation}

Now we shall conclude that the total number of cycles in $u_\mu$ and $v_\nu$ is exactly three.
Recall that $u_\mu$ (resp. $v_\nu$) contains at most two cycles.
Suppose that $u_\mu=u_i u_j$ (resp. $v_\nu=v_k v_l$) contains two cycles.
Then applying the sign character $\chi_{(1^n)}$ on both sides of the equation~\eqref{eq:4_cycles}, we get
\begin{align*}
    (-1)^{\mu_i-1}(-1)^{\mu_j-1}(-1)^{\nu_k-1}(-1)^{\nu_l-1} &= -1\\
    (-1)^{\mu_i+\mu_j+\nu_k+\nu_l-4} &= -1\\
    (-1)^{2\mu_i+2\mu_j-4} &= -1\\
    1 &= -1.
\end{align*}
This is absurd.
Similarly, one can show that the total number of cycles in $u_\mu$ and $v_\nu$ is not two.
Therefore, we conclude that the total number of cycles occurring in $u_\mu$ and $v_\nu$ is exactly three.
Hence, we proved the following lemma.
\begin{lemma}
    Let $\mu$ and $\nu$ be partitions such that $a_{\mu\nu}^{(2)}>0$ and $\nu>\mu$ in the dominance order.
    Then there are two parts $a,b$ of $\mu$ and a part $c$ of $\nu$ such that $a+b=c$ and $\bar\mu=\bar\nu$, where $\bar\mu$ (resp. $\bar\nu$) is obtained from $\mu$ (resp. $\nu$) by removing the parts $a,b$ (resp. $c$).
\end{lemma}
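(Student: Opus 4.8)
The plan is to read the hypothesis $a_{\mu\nu}^{(2)}>0$ as a statement about multiplication by a transposition, and then let the dominance hypothesis select the correct case. Concretely, $a_{\mu\nu}^{(2)}>0$ means we may choose representatives $w_\mu\in C_\mu$ and $w_\nu\in C_\nu$ with $w_\mu w_\nu=(1~2)$, which I rewrite as
\begin{equation*}
w_\mu=(1~2)\,w_\nu^{-1}.
\end{equation*}
Since $w_\nu^{-1}$ has cycle type $\nu$, the partition $\mu$ is exactly the cycle type arising from a permutation of type $\nu$ after left-multiplication by the transposition $(1~2)$.

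The key tool is the standard dichotomy for multiplying a permutation by a transposition $\tau=(x~y)$: the cycle type of $\tau\sigma$ agrees with that of $\sigma$ on all cycles disjoint from $\{x,y\}$, while the cycle(s) meeting $\{x,y\}$ change in one of two ways. If $x$ and $y$ lie in a common cycle of $\sigma$, of length $c$, that cycle splits into two cycles whose lengths $a,b$ satisfy $a+b=c$; if $x$ and $y$ lie in distinct cycles, of lengths $p,q$, these merge into a single cycle of length $p+q$. Applying this to $\sigma=w_\nu^{-1}$ and $\tau=(1~2)$ gives two cases. In the split case, $\mu$ is obtained from $\nu$ by replacing one part $c$ by two parts $a,b$ with $a+b=c$ and leaving the remaining parts untouched, so $\bar\mu=\bar\nu$ and the parts $a,b,c$ are precisely those demanded by the lemma. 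In the merge case, $\mu$ is obtained from $\nu$ by replacing two parts $p,q$ by the single part $p+q$.

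It then remains to rule out the merge case, and this is where the hypothesis $\nu>\mu$ enters. I would invoke the monotonicity of the dominance order under merging: if a partition is modified by fusing two parts into their sum, the result strictly dominates the original (moving a box from a shorter row up to a longer one weakly increases every partial sum, and a merge is a composite of such moves, at least one of which is strict). Thus in the merge case $\mu>\nu$, contradicting $\nu>\mu$; hence only the split case survives, which is exactly the conclusion.

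The main obstacle I anticipate is making the transposition dichotomy and the dominance monotonicity precise — in particular pinning down that the split produces genuine parts $a,b\geq 1$ summing to $c$, and that merging is strictly dominance-increasing so that the contradiction is genuine; once these two standard facts are in hand the case analysis is immediate. An alternative route, which avoids citing the dichotomy, is to track the orbits of $\{1,2\}$ directly: show that the cycles of $w_\mu$ and of $w_\nu$ meeting $\{1,2\}$ have the same support, peel them off to obtain $\bar\mu=\bar\nu$, and use a sign/parity count on the surviving product to force exactly three such cycles in total.
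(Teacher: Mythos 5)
Your proof is correct, but it takes a genuinely different route from the paper's. The paper proves the split/merge phenomenon by hand: starting from $w_\mu w_\nu=(1~2)$, it shows that the supports of the cycles of $w_\mu$ and of $w_\nu$ meeting $\{1,2\}$ coincide (by chasing an element $t\in S_\nu\setminus S_\mu$ with $v_\nu(t)\in S_\mu$), peels off the common remaining cycles to obtain $\bar\mu=\bar\nu$ and $u_\mu v_\nu=(1~2)$, and then pins the total number of cycles in $u_\mu$ and $v_\nu$ at exactly three by evaluating the sign character on $u_\mu v_\nu$ (ruling out four cycles, and similarly two). Your appeal to the classical dichotomy --- multiplying by a transposition $(x~y)$ splits the cycle containing both $x$ and $y$ into cycles of lengths $a,b$ with $a+b=c$, or merges the two cycles through $x$ and $y$, leaving all other cycles untouched --- subsumes both of those steps in a single standard fact, so your argument is shorter at the cost of citing (or having to prove) that fact; indeed, the ``alternative route'' you sketch in your last sentence is precisely the paper's proof. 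One point genuinely in your favor: the paper's sign computation only shows that the total cycle count is three, i.e.\ that \emph{one} of $\mu,\nu$ carries the two parts $a,b$ and the other the part $a+b$, and it never explicitly argues why $\mu$ must be the split side; that orientation requires exactly the dominance monotonicity you supply --- merging two parts strictly raises a partition in the dominance order, so the merge case would give $\mu>\nu$, contradicting $\nu>\mu$. Your box-moving justification of that monotonicity is sound: replacing parts $p\geq q\geq 1$ by $p+1,q-1$ repeatedly realizes the merge as a chain of raising moves, each weakly increasing all partial sums with at least one strict increase, so the merged partition strictly dominates. Thus your proposal is complete and in one respect more explicit than the paper.
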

In order to prove our main theorem, it suffices to show the following lemma.
\begin{lemma}
    Let $\mu$ and $\nu$ be partitions of $n>10$.
    For positive integers $a$ and $b$, let $a$ and $b$ be parts of $\mu$ and $a+b$ be a part of $\nu$ such that $\bar\mu=\bar\nu$, where $\bar\mu$ (resp. $\bar\nu$) is obtained from $\mu$ (resp. $\nu$) by removing a part $a$ and a part $b$ (resp. a part $a+b$).
    Then there exists a non-linear irreducible character of $S_n$ which does not vanish on both $w_\mu$ and $w_\nu$ except when $\mu=(n-1,1)$ and $\nu=(n)$.
\end{lemma}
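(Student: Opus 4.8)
The plan is to exhibit, for every admissible pair outside the stated exception, an explicit non-linear $\chi_\lambda$ with $\chi_\lambda(w_\mu)\neq 0$ and $\chi_\lambda(w_\nu)\neq 0$, computing the required values by the Murnaghan--Nakayama rule (conveniently phrased through $\beta$-sets and bead moves on the abacus). Throughout I may assume $a\le b$, and I record that $\nu$ is obtained from $\mu$ by merging the parts $a,b$ into $a+b$, so that $\#\fix(w_\mu)=m_1(\rho)+[a=1]+[b=1]$ and $\#\fix(w_\nu)=m_1(\rho)$, where $\rho=\bar\mu=\bar\nu$ and $m_1$ counts parts equal to $1$. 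The first and cheapest tool is the standard character $\chi_{(n-1,1)}(w)=\#\fix(w)-1$, which is non-zero precisely when $w$ has a number of fixed points different from $1$. This already settles every pair with $m_1(\rho)\ge 2$ (and several with $m_1(\rho)\in\{0,1\}$), reducing the problem to the thin regime $m_1(\rho)\in\{0,1\}$, where the standard character vanishes on at least one of the two classes.

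The second tool is the family of hook characters. Using that $\chi_{(n-k,1^k)}$ is $\wedge^{k}$ of the standard representation, whose eigenvalues on $w_\sigma$ are all $\sigma_i$-th roots of unity with a single $1$ removed, I obtain the generating identity
\[
\sum_{k=0}^{n-1}\chi_{(n-k,1^k)}(w_\sigma)\,t^{k}=\frac{\prod_i\bigl(1-(-t)^{\sigma_i}\bigr)}{1+t}.
\]
Writing $F_\mu(t)$ and $F_\nu(t)$ for the two sides with $\sigma=\mu,\nu$, a non-linear \emph{hook} character works exactly when $F_\mu$ and $F_\nu$ share a non-zero coefficient in some interior degree $1\le k\le n-2$. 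This disposes of the case $\rho=\emptyset$: here $F_\nu(t)=\sum_{k=0}^{n-1}(-1)^k t^k$ has a non-zero coefficient in every degree, while $F_\mu(t)=(1-(-t)^a)(1-(-t)^b)/(1+t)$. For $a\ge 2$ the numerator has no linear term, so the quotient cannot be supported only at $t^0$ and $t^{n-1}$, whence it has an interior term that automatically meets a non-zero coefficient of $F_\nu$. For $a=1$ the quotient collapses to $F_\mu(t)=1-(-t)^{n-1}$, supported only at the two endpoints; within $\rho=\emptyset$ this is exactly the exceptional pair $\mu=(n-1,1)$, $\nu=(n)$, on which all non-linear hook characters vanish on $w_\mu$.

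The remaining work, and the genuine obstacle, is the regime $a=1$ with $m_1(\rho)\in\{0,1\}$ and $\rho\neq\emptyset$ (together with the stray cases where one class has a unique fixed point), since there the hook characters can fail simultaneously even though the pair is not exceptional. Here I would abandon hooks and write down tailored multi-row shapes --- the prototype being $\lambda=(n-4,2,2)$ and small variants carrying a short block of rows of length $2$ --- and verify non-vanishing by peeling the large parts of $\mu$ and $\nu$ via the Murnaghan--Nakayama rule. The mechanism is that after removing the long rim hooks the computation reduces to a micro-partition of size $2$ or $3$ (such as $(1^3)$, $(3)$, or $(1,1)$), on which the residual class is a transposition or a full cycle and the relevant value is a trivial or sign value $\pm1$, hence non-zero; tracking the single available bead move on the $\beta$-set makes both $\chi_\lambda(w_\mu)$ and $\chi_\lambda(w_\nu)$ computable by hand. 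The crux is to select $\lambda$ so that the peelings for $\mu$ and for $\nu$ both survive with non-cancelling sign, and to confirm that no admissible $\lambda$ survives precisely when $\rho=\emptyset$, $a=1$, $b=n-1$, that is for $\mu=(n-1,1)$, $\nu=(n)$. Assembling the three tools across these cases, with $n>10$ ensuring that the interior degrees and the short row-blocks do not collide, yields the lemma.
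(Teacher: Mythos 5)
Your first two tools are sound, and where they apply they are genuinely slicker than the paper. The fixed-point count correctly disposes of $m_1(\rho)\ge 2$ via $\chi_{(n-1,1)}$, and the generating identity for hook values (which is correct; it checks on $(1^n)$ and on $n$-cycles) gives a clean treatment of $\rho=\emptyset$: your linear-coefficient argument showing $F_\mu$ must have an interior non-zero coefficient when $a,b\ge 2$, against $F_\nu=\sum_k(-1)^kt^k$ for $\nu=(n)$, is valid, and it correctly isolates the exceptional pair $\{(n-1,1),(n)\}$ as the unique $\rho=\emptyset$ case where $F_\mu$ is supported only at the endpoints. The paper instead handles everything uniformly by a contrapositive case analysis, so for these regimes your route is a genuine and attractive alternative.

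However, there is a genuine gap: the regime you yourself identify as ``the remaining work, and the genuine obstacle'' --- $m_1(\rho)\in\{0,1\}$ with $\rho\neq\emptyset$, plus the unique-fixed-point strays --- is exactly where essentially all of the paper's labor lies, and your proposal only offers a plan there, not an argument. Phrases such as ``I would abandon hooks and write down tailored multi-row shapes'' and ``the crux is to select $\lambda$ so that the peelings \dots\ both survive with non-cancelling sign, and to confirm that no admissible $\lambda$ survives precisely when $\rho=\emptyset$, $a=1$'' are restatements of what must be proved, not proofs. Moreover, your direct-construction framing is materially harder than the paper's: the paper argues by contradiction, assuming every non-linear character vanishes on one of the two classes, and uses that hypothesis with $\chi_{(n-2,2)}$, $\chi_{(n-2,1,1)}$, $\chi_{(n-3,3)}$, $\chi_{(n-3,2,1)}$, $\chi_{(n-3,1,1,1)}$, $\chi_{(n-4,2,1,1)}$, $\chi_{(n-4,1^4)}$, $\chi_{(n-5,1^5)}$, $\chi_{(n-5,5)}$ and inductive hook/two-row recursions to pin down $m_1,\dotsc,m_5$ \emph{before} ever invoking the prototype $\chi_{(n-4,2,2)}$; only under those pinned multiplicities (all remaining parts $\ge 5$, $m_2(\mu)=1$, etc.) does the Murnaghan--Nakayama peeling reduce to a single rim-hook removal per large part with a controlled micro-partition at the end. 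Without such constraints the small parts of $\rho$ (parts equal to $2,3,4$, or repeated parts) create multiple cells of the same hook length and possible cancellation, so no single uniform shape like $(n-4,2,2)$ works for all admissible $(\mu,\nu)$, and you have not specified, let alone verified, the case-dependent choices of $\lambda$ that would. As written, the proposal proves the lemma only for $m_1(\rho)\ge 2$ and $\rho=\emptyset$; the core of the lemma remains open in your approach.
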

\begin{proof}
Let us assume that $\mu$ and $\nu$ are partitions of $n$ such that any non-linear irreducible character of $S_n$ vanishes on at least one of $w_\mu$ or $w_\nu$.
Now it suffices to show that $\mu=(n-1,1)$ and $\nu=(n)$.
For a partition $\lambda$ of $n$, let $m_i$ be the number of parts of $\lambda$ equal to $i$.
Let us recall a description of some irreducible characters of $S_n$ which we will use repeatedly in the proof. We refer the reader to~\cite[Section 8.1]{MR0002127} for the details of its proof.
\begin{itemize}
    \item $\chi_{(n-1,1)}(w_\mu)=m_1-1,$
    \item $\chi_{(n-2,2)}(w_\mu)=\dfrac{1}{2}m_1(m_1-3)+m_2,$
    \item $\chi_{(n-2,1,1)}(w_\mu)=\dfrac{1}{2}(m_1-1)(m_1-2)-m_2,$
    \item $\chi_{(n-3,3)}(w_\mu)=\dfrac{1}{6}m_1(m_1-1)(m_1-5)+m_2(m_1-1)+m_3,$
    \item $\chi_{(n-3,2,1)}(w_\mu)=\dfrac{1}{3}m_1(m_1-2)(m_1-4)-m_3,$
    \item $\chi_{(n-3,1,1,1)}(w_\mu)=\dfrac{1}{6}(m_1-1)(m_1-2)(m_1-3)-(m_1-1)m_2+m_3,$
    \item $\chi_{(n-4,2,1,1)}(w_\mu)=\dfrac{1}{8}m_1(m_1-2)(m_1-3)(m_1-5)-\dfrac{1}{2}m_2m_1(m_1-3)-\dfrac{1}{2}m_2(m_2-1)+m_4,$
    \item $\chi_{(n-4,1,1,1,1)}(w_\mu)=\dfrac{1}{24}(m_1-1)(m_1-2)(m_1-3)(m_1-4)-\dfrac{1}{2}(m_1-1)(m_1-2)m_2+(m_1-1)m_3+\dfrac{1}{2}m_2(m_2-1)-m_4,$
\end{itemize}

Without loss of generality, we may assume that $a\geq b$.
We have $m_t(\mu)=m_t(\nu)$ for all $t$ ($\neq a,b$), $m_a(\mu)=m_a(\nu)+1$, $m_b(\mu)=m_b(\nu)+1$, and $m_{a+b}(\mu)+1=m_{a+b}(\nu)$.
We shall prove the lemma by case-by-case analysis on the values of $a$ and $b$.
\subsection*{1} $b\geq 3$.

Let us apply the character $\chi_{(n-1,1)}$ on both permutations $w_\mu$ and $w_\nu$.
We obtain that $\chi_{(n-1,1)}(w_\mu)=m_1(\mu)-1=m_1(\nu)-1=\chi_{(n-1,1)}(w_\nu)$.
Since $\chi_{(n-1,1)}(w_\mu)=0$ or $\chi_{(n-1,1)}(w_\nu)=0$, we have $m_1(\mu)=m_1(\nu)=1$.
Similarly, applying the character $\chi_{(n-2,1,1)}$ on both permutations $w_\mu$ and $w_\nu$, we have
$m_2(\mu)=m_2(\nu)=0$.
In this case, the character $\chi_{(n-2,2)}$ does not vanish on both the permutations $w_\mu$ and $w_\nu$, a contradiction.

\subsection*{2} $b=2$ and $a>3$.

Note that $m_2(\mu)=m_2(\nu)+1>0$.
Like before, applying $\chi_{(n-1,1)}$ yields that $m_1(\nu)=m_1(\mu)=1$.
Since $\chi_{(n-2,1,1)}(w_\mu)=-m_2(\mu)\neq 0$, we must have $\chi_{(n-2,1,1)}(w_\nu)=-m_2(\nu) = 0$.
Therefore, $m_2(\mu)=1$ and $m_2(\nu)=0$.
Let us apply $\chi_{(n-3,3)}$, then we obtain $\chi_{(n-3,3)}(w_\mu)=m_3(\mu)=m_3(\nu)=\chi_{(n-3,3)}(w_\nu)$.
Our hypothesis on $\mu$ and $\nu$ implies that $m_3(\mu)=m_3(\nu)=0$.
In this case, the character $\chi_{(n-3,2,1)}$ does not vanish on both the permutations $w_\mu$ and $w_\nu$, a contradiction.

\subsection*{3} $b=2$ and $a=3$.

Note that $m_2(\mu)=m_1(\nu)+1>0$, $m_3(\mu)=m_3(\nu)+1>0$ and $m_5(\mu)+1=m_5(\nu)>0$.
Like in the previous case, applying $\chi_{(n-1,1)}$ and $\chi_{(n-2,1,1)}$ yields that $m_1(\nu)=m_1(\mu)=1$, $m_2(\mu)=1$, and $m_2(\nu)=0$.
Let us apply $\chi_{(n-3,3)}$, then we obtain $\chi_{(n-3,3)}(w_\mu)=m_3(\mu)>0$ and $\chi_{(n-3,3)}(w_\nu)=m_3(\nu)$.
Since one of them is zero, we have $m_3(\mu)=1$ and $m_3(\nu)=0$.
Now let us apply $\chi_{(n-4,1,1,1,1)}$ on both permutations $w_\mu$ and $w_\nu$, we have $\chi_{(n-4,1,1,1,1)}(w_\mu)=-m_4(\mu)=-m_4(\nu)=\chi_{(n-4,1,1,1,1)}(w_\nu)$.
Since one of them is zero and $m_4(\mu)=m_4(\nu)$, we must have $m_4(\mu)=m_4(\nu)=0$.
Recall that 
\begin{equation}\label{eq:recursion_for_hook}
\chi_{(n-k,1^k)}=\chi_{(n-k)}\times \chi_{(1^k)}\uparrow_{S_{n-k}\times S_k}^{S_n}-\chi_{(n-k+1,1^{k-1})}.
\end{equation}
Applying the character $\chi_{(n-5,1,1,1,1,1)}$ on both permutations $w_\mu$ and $w_\nu$, we have
\begin{align*}
    \chi_{(n-5,1,1,1,1,1)}(w_\mu)&=\chi_{(n-5)}\times \chi_{(1^5)}\uparrow_{S_{n-5}\times S_5}^{S_n}(w_\mu)-\chi_{(n-4,1^4)}(w_\mu)\\
    &=(-1)^{4}m_5(\mu)+(-1)-0\\
    &=m_5(\mu)-1.
\end{align*}
Similarly, we have $\chi_{(n-5,1,1,1,1,1)}(w_\nu)=m_5(\nu)>0$.
Hence, we must have $m_5(\mu)=1$ and therefore, $m_5(\nu)=2$.
In this case, the character $\chi_{(n-5,5)}$ does not vanish on both the permutations $w_\mu$ and $w_\nu$, a contradiction.
To see this, let us recall a recursive description for the irreducible characters corresponding to the two-row partitions;
\begin{equation*}\label{eq:recursion_for_two_row}
 \chi_{(n-k,k)}=\chi_{(n-k)}\times \chi_{(k)}\uparrow_{S_{n-k}\times S_k}^{S_n}-\chi_{(n)}-\chi_{(n-1,1)}-\chi_{(n-2,2)}-\chi_{(n-3,3)}-\dots-\chi_{(n-k+1,k-1)}.
\end{equation*}
Using the above equation, we have $\chi_{(n-4,4)}(w_\mu)=-1$ and $\chi_{(n-4,4)}(w_\nu)=0$.
Now let us apply the character $\chi_{(n-5,5)}$ on both permutations $w_\mu$ and $w_\nu$, we have $\chi_{(n-5,5)}(w_\mu)=1$ and $\chi_{(n-5,5)}(w_\nu)=2$, a contradiction.

\subsection*{4} $b=2$ and $a=2$.

Note that $m_2(\mu)=m_1(\nu)+2>0$ and $m_4(\mu)+1= m_4(\nu)>0$.
Like in the previous case, we have $m_1(\mu)=m_1(\nu)=1$.
Let us apply $\chi_{(n-2,1,1)}$, then we obtain $\chi_{(n-2,1,1)}(w_\mu)=m_2(\mu)$ and $\chi_{(n-2,1,1)}(w_\nu)=m_2(\nu)$.
Since one of them is zero, we have $m_2(\mu)=2$ and $m_2(\nu)=0$.
In this case, the character $\chi_{(n-2,2)}$ does not vanish on both the permutations $w_\mu$ and $w_\nu$, a contradiction.

\subsection*{5} $b=1$ and $a\geq 3$.

Note that $m_1(\mu)=m_1(\nu)+1>0$.
Let us apply $\chi_{(n-1,1)}$ on both permutations $w_\mu$ and $w_\nu$.
We obtain that $\chi_{(n-1,1)}(w_\mu)=m_1(\mu)-1$ and $\chi_{(n-1,1)}(w_\nu)=m_1(\nu)-1$.
Since one of them have to be zero, we have $m_1(\mu)=2$ or $1$.
\subsection*{5.1} $m_1(\mu)=2$ and $m_1(\nu)=1$.

Applying the character $\chi_{(n-2,2)}$ on both permutations $w_\mu$ and $w_\nu$, we have $\chi_{(n-2,2)}(w_\mu)=-1+m_2(\mu)$ and $\chi_{(n-2,2)}(w_\nu)=-1+m_2(\nu)$.
Since one of them is zero and $m_2(\mu)=m_2(\nu)$, we must have $m_2(\mu)=m_2(\nu)=1$.
In this case, the character $\chi_{(n-2,1,1)}$ does not vanish on both the permutations $w_\mu$ and $w_\nu$, a contradiction.

\subsection*{5.2} $m_1(\mu)=1$ and $m_1(\nu)=0$.

Let us apply $\chi_{(n-2,1,1)}$ on both permutations $w_\mu$ and $w_\nu$, we have $\chi_{(n-2,1,1)}(w_\mu)=-m_2(\mu)$ and $\chi_{(n-2,1,1)}(w_\nu)=1-m_2(\nu)$.
Since one of them is zero and $m_2(\mu)=m_2(\nu)$, we must have $m_2(\mu)=m_2(\nu)=1$ or $0$.
\subsection*{5.2.1} $m_2(\mu)=m_2(\nu)=1$ and $a>3$.

Let us apply $\chi_{(n-3,1,1,1)}$ on permutations $w_\mu$ and $w_\nu$, we have $\chi_{(n-3,1,1,1)}(w_\mu)=m_3(\mu)$ and $\chi_{(n-3,1,1,1)}(w_\nu)=m_3(\nu)$.
Since one of them is zero and $m_3(\mu)=m_3(\nu)$, we must have $m_3(\mu)=m_3(\nu)=0$.
For $2<i<a$, we shall prove that $m_i(\mu)=m_i(\nu)=0$ and $\chi_{(n-i,1^i)}(w_\mu) = 0 $ and $\chi_{(n-i,1^i)}(w_\nu) = 0$.
Clearly, the claim holds when $i=3$.
By induction, we may assume that the claim is true for $i<k<a$.
Now let us prove the claim for $i=k>3$.
Let us apply the character $\chi_{(n-k,1^k)}$ on both permutations $w_\mu$ and $w_\nu$, we have
\begin{align*}
    \chi_{(n-k,1^k)}(w_\mu)&=\chi_{(n-k)}\times \chi_{(1^k)}\uparrow_{S_{n-k}\times S_k}^{S_n} (w_\mu) - \chi_{(n-k+1,1^{k-1})}(w_\mu).\\
    &=(-1)^{k-1} m_k(\mu) -0 \text{ (by induction)},\\
    &=(-1)^{k-1}m_k(\mu).
\end{align*}
Similarly, we have $\chi_{(n-k,1^k)}(w_\nu)=(-1)^{k-1}m_k(\nu)$.
Since one of them is zero, we must have $m_k(\mu)=m_k(\nu)=0$.
Clearly, $\chi_{(n-k,1^k)}(w_\mu)=\chi_{(n-k,1^k)}(w_\nu)=0$.
This completes the claim and the induction.
Now let us apply the character $\chi_{(n-a,1^a)}$ on both permutations $w_\mu$ and $w_\nu$, we have
\begin{align*}
    \chi_{(n-a,1^a)}(w_\mu)&=\chi_{(n-a)}\times \chi_{(1^a)}\uparrow_{S_{n-a}\times S_a}^{S_n} (w_\mu) - \chi_{(n-a+1,1^{a-1})}(w_\mu)\\
    &=(-1)^{a-1} m_a(\mu) -0 \text{ (by the claim)},\\
    &=(-1)^{a-1}m_a(\mu).
\end{align*}
Similarly, we have $\chi_{(n-a,1^a)}(w_\nu)=(-1)^{a-1}m_a(\nu)$.
Since one of them is zero and $m_a(\mu)=m_a(\nu)+1>0$, we must have $m_a(\nu)=0$.
This implies that $m_a(\mu)=1$.
Finally, we may use the character $\chi_{(n-a-1,1^{a+1})}$ on both permutations $w_\mu$ and $w_\nu$, we have
\begin{align*}
    \chi_{(n-a-1,1^{a+1})}(w_\mu)&=\chi_{(n-a-1)}\times \chi_{(1^{a+1})}\uparrow_{S_{n-a-1}\times S_{a+1}}^{S_n} (w_\mu) - \chi_{(n-a,1^a)}(w_\mu)\\
    &=(-1)^{a} m_{a+1}(\mu)+(-1)^{a-1} -(-1)^{a-1}\\
    &=(-1)^{a}m_{a+1}(\mu).
\end{align*}
Similarly, we have 
\begin{align*}
    \chi_{(n-a-1,1^{a+1})}(w_\nu)&=\chi_{(n-a-1)}\times \chi_{(1^{a+1})}\uparrow_{S_{n-a-1}\times S_{a+1}}^{S_n} (w_\nu) - \chi_{(n-a,1^a)}(w_\nu)\\
    &=(-1)^{a} m_{a+1}(\nu) -0\\
    &=(-1)^{a}m_{a+1}(\nu).
\end{align*}
Since one of them is zero and $m_{a+1}(\mu)+1=m_{a+1}(\nu)>0$, we must have $m_{a+1}(\mu)=0$ and $m_{a+1}(\nu)=1$.
We can go on to prove that all the parts of $\mu$ and $\nu$ which are greater than $a+1$ are equal and occurs with the multiplicity $1$.
But this is not necessary as we already have $\chi_{(n-4,2,2)}(w_\mu)\neq 0$ and $\chi_{(n-4,2,2)}(w_\nu)\neq 0$ by Murnaghan-Nakayama rule.
To see this, notice that all the parts of $\mu$ (resp. $\nu$) other than $1,2$ (resp. $2$) are greater than or equal to $a$ (resp. $a+1$) which is greater than or equal to $4$ (resp. $5$).
Also, $m_a(\mu)=m_{a+1}(\nu)=1$ and $m_{a+1}(\mu)=m_a(\nu)=0$.
Let us fix the part $a$ of $\mu$.
This splits the partition $\mu$ to a partition $(a,2,1)$ of $a+2+3$ and a partition $\mu^1$ of $n-a-3$, where $\mu^1$ is obtained from $\mu$ by removing the parts $a,2,1$.
Recall that all the parts of $\mu^1$ are greater than or equal to $5$.
Now we may apply Murnaghan-Nakayama rule.
If $\mu^1$ has at least a part, then there exists a unique cell (in the first row) of $(n-4,2,2)$ with hook length $\mu^1_1$.
Hence, we have
\begin{align*}
    \chi_{(n-4,2,2)}(w_{(\mu^1 \cup (a,2,1))})&=\chi_{(n-4-\mu^1_1,2,2)}(w_{\mu^2\cup (a,2,1)}),
\end{align*}
where $\mu^2$ is the partition obtained from $\mu^1$ by removing the part $\mu^1_1$.
Using induction, we may conclude that $\chi_{(n-4-\mu^2_1,2,2)}(w_{\mu^2\cup (a,2,1)})=\chi_{(a-1,2,2)}(w_{(a,2,1)})$.
Since there is a unique cell (in the first row) of $(a-1,2,2)$ with hook length $a$, we have $\chi_{(a-1,2,2)}(w_{(a,2,1)})=\chi_{(1,1,1)}(w_{(a,2,1)})=-1\neq 0$.
Similarly, we can show that $\chi_{(n-4,2,2)}(w_{\nu})=-1$.
If $\mu^1$ has no part, then also we have $\chi_{(n-4,2,2)}(w_{(a,2,1)})=-1$ and $\chi_{(n-4,2,2)}(w_{(a+1,2)})=-1$.
This is a contradiction to the assumption that every non-linear irreducible character of $S_n$ vanishes on at least one of $w_\mu$ or $w_\nu$.

\subsection*{5.2.2} $m_2(\mu)=m_2(\nu)=1$ and $a=3$.

Note that $m_3(\mu)=m_3(\nu)+1>0$ and $m_4(\mu)+1=m_4(\nu)>0$.
Let us apply $\chi_{(n-3,1,1,1)}$ on both permutations $w_\mu$ and $w_\nu$, we have $\chi_{(n-3,1,1,1)}(w_\mu)=m_3(\mu)$ and $\chi_{(n-3,1,1,1)}(w_\nu)=m_3(\nu)$.
Since one of them is zero and $m_3(\mu)>0$, we must have $m_3(\nu)=0$ and hence $m_3(\mu)=1$.
Let us apply $\chi_{(n-4,1,1,1,1)}$ on both permutations $w_\mu$ and $w_\nu$, we have $\chi_{(n-4,1,1,1,1)}(w_\mu)=-m_4(\mu)$ and $\chi_{(n-4,1,1,1,1)}(w_\nu)=-m_4(\nu)$.
Since one of them is zero and $m_4(\nu)>0$, we must have $m_4(\mu)=0$ and hence $m_4(\nu)=1$.
Now it is easy to see that the character $\chi_{(n-4,2,2)}$ does not vanish on both the permutations $w_\mu$ and $w_\nu$ by Murnaghan-Nakayama rule as we did in the previous case, a contradiction.

\subsection*{5.2.3} $m_2(\mu)=m_2(\nu)=0$ and $a>3$.

Let us apply the character $\chi_{(n-3,3)}$ on both permutations $w_\mu$ and $w_\nu$, we have $\chi_{(n-3,3)}(w_\mu)=m_3(\mu)=m_3(\nu)=\chi_{(n-3,3)}(w_\nu)$.
Since one of them is zero and $m_3(\mu)=m_3(\nu)$, we must have $m_3(\mu)=m_3(\nu)=0$.
Let $t=\min(a,\lfloor n/2 \rfloor)$.
For $3\leq k <t$, we shall prove that $\chi_{(n-k,k)}(w_\mu)=0=\chi_{(n-k,k)}(w_\nu)$ and $m_k(\mu)=m_k(\nu)=0$.
The base case $k=3$ is already done.
By induction, we may assume that the claim is true for $3\leq i<k<t$.
Now let us prove the claim for $k$.
Let us apply the character $\chi_{(n-k,k)}$ on both permutations $w_\mu$ and $w_\nu$, we have
\begin{align*}
    \chi_{(n-k,k)}(w_\mu)&=\chi_{(n-k)}\times \chi_{(k)}\uparrow_{S_{n-k}\times S_k}^{S_n} (w_\mu) - \chi_{(n)}(w_\mu) - \chi_{(n-1,1)}(w_\mu)\\
    &\quad -\chi_{(n-2,2)}(w_\mu)-\chi_{(n-3,3)}(w_\mu)-\dots-\chi_{(n-k,k)}(w_\mu)\\
    &=(-1)^{k-1} m_k(\mu) -1 -0 -(-1) -0 -\dotsc-0\\
    &=(-1)^{k-1}m_k(\mu).
\end{align*}
Similarly, we have $\chi_{(n-k,k)}(w_\nu)=(-1)^{k-1}m_k(\nu)$.
Since one of them is zero and $m_k(\mu)=m_k(\nu)$, we must have $m_k(\mu)=m_k(\nu)=0$.
Clearly, $\chi_{(n-k,k)}(w_\mu)=\chi_{(n-k,k)}(w_\nu)=0$.
This completes the induction and proves the claim.
Suppose that $a>t$, then we must have $m_k(\mu)=m_k(\nu)=0$ for all $3\leq k\leq t$.
Indeed, $m_t(\mu)=m_t(\nu)=0$ follows from the proof of the above claim.
In this case, there can be only one more part in $\mu$ and that is equal to $n-1$.
Hence, $\mu=(n-1,1)$ and $\nu=(n)$.

Otherwise, we have $t=a$.
Let us apply the character $\chi_{(n-a,a)}$ on both permutations $w_\mu$ and $w_\nu$, we have $\chi_{(n-a,a)}(w_\mu)=m_a(\mu)$ and $\chi_{(n-a,a)}(w_\nu)=m_a(\nu)$.
Since one of them is zero and $m_a(\mu)=m_a(\nu)+1>0$, we must have $m_a(\nu)=0$ and $m_a(\mu)=1$.
This implies that $m_{a+1}(\nu)>0$ and $m_k(\nu)=0$ for all $k< a+1$.
This is impossible as all the parts of $\nu$ other than $a+1$ are less than $a+1$.

\subsection*{5.2.4} $m_2(\mu)=m_2(\nu)=0$ and $a=3$.

Note that $m_3(\mu)=m_3(\nu)+1>0$ and $m_4(\mu)+1=m_4(\nu)>0$.
Let us apply the character $\chi_{(n-3,3)}$ on both permutations $w_\mu$ and $w_\nu$, we have $\chi_{(n-3,3)}(w_\mu)=m_3(\mu)$ and $\chi_{(n-3,3)}(w_\nu)=m_3(\nu)$.
Since one of them is zero and $m_3(\mu)>0$, we must have $m_3(\nu)=0$ and hence $m_3(\mu)=1$.
Let us apply the character $\chi_{(n-4,4)}$ on both permutations $w_\mu$ and $w_\nu$, we have $\chi_{(n-4,4)}(w_\mu)=m_4(\mu)+1-1-0-(-1)-1=m_4(\mu)$ and $\chi_{(n-4,4)}(w_\nu)=m_4(\nu)-1-(-1)-0-0=m_4(\nu)$.
Since one of them is zero and $m_4(\nu)>0$, we must have $m_4(\mu)=0$ and hence $m_4(\nu)=1$.
In this case, the character $\chi_{(n-4,2,1,1)}$ does not vanish on both the permutations $w_\mu$ and $w_\nu$, a contradiction.

\subsection*{6} $b=1$ and $a=2$.

Note that $m_1(\mu)=m_1(\nu)+1>0$, $m_2(\mu)=m_2(\nu)+1>0$ and $m_3(\mu)+1=m_3(\nu)>0$.
Let us apply the character $\chi_{(n-1,1)}$ on both permutations $w_\mu$ and $w_\nu$, we have $\chi_{(n-1,1)}(w_\mu)=m_1(\mu)-1$ and $\chi_{(n-1,1)}(w_\nu)=m_1(\nu)-1$.
Since one of them is zero, we must have $m_1(\mu)=2$ or $1$.

\subsection*{6.1} $m_1(\mu)=2$ and $m_1(\nu)=1$.

Let us apply the character $\chi_{(n-2,1,1)}$ on both permutations $w_\mu$ and $w_\nu$, we have $\chi_{(n-2,1,1)}(w_\mu)=-m_2(\mu)$ and $\chi_{(n-2,1,1)}(w_\nu)=-m_2(\nu)$.
Since one of them is zero and $m_2(\mu)>0$, we must have $m_2(\nu)=0$ and hence $m_2(\mu)=1$.
Now we may apply the character $\chi_{(n-3,1,1,1)}$ on both permutations $w_\mu$ and $w_\nu$, we have $\chi_{(n-3,1,1,1)}(w_\mu)=m_3(\mu)-1$ and $\chi_{(n-3,1,1,1)}(w_\nu)=m_3(\nu)$.
Since one of them is zero and $m_3(\nu)>0$, we must have $m_3(\mu)=1$ and hence $m_3(\nu)=2$.
In this case, the character $\chi_{(n-3,3)}$ does not vanish on both the permutations $w_\mu$ and $w_\nu$, a contradiction.

\subsection*{6.2} $m_1(\mu)=1$ and $m_1(\nu)=0$.

Let us apply the character $\chi_{(n-2,1,1)}$ on both permutations $w_\mu$ and $w_\nu$, we have $\chi_{(n-2,1,1)}(w_\mu)=-m_2(\mu)$ and $\chi_{(n-2,1,1)}(w_\nu)=1-m_2(\nu)$.
Since one of them is zero and $m_2(\mu)>0$, we must have $m_2(\nu)=1$ and hence $m_2(\mu)=2$.
In this case, the character $\chi_{(n-2,2)}$ does not vanish on both the permutations $w_\mu$ and $w_\nu$, a contradiction.

\subsection*{7} $b=1$ and $a=1$.

Note that $m_1(\mu)=m_1(\nu)+2>0$ and $m_2(\mu)+1=m_2(\nu)>0$.
Applying the character $\chi_{(n-1,1)}$ on both permutations $w_\mu$ and $w_\nu$ yields that $m_1(\mu)=3$ and $m_1(\nu)=1$.
Let us apply the character $\chi_{(n-2,1,1)}$ on both permutations $w_\mu$ and $w_\nu$, we have $\chi_{(n-2,1,1)}(w_\mu)=1-m_2(\mu)$ and $\chi_{(n-2,1,1)}(w_\nu)=-m_2(\nu)$.
Since one of them is zero and $m_2(\nu)>0$, we must have $m_2(\mu)=1$ and hence $m_2(\nu)=2$.
In this case, the character $\chi_{(n-2,2)}$ does not vanish on both the permutations $w_\mu$ and $w_\nu$, a contradiction.

This completes the proof.
\end{proof}
\section*{Acknowledgements}
I thank Rijubrat Kundu for introducing me to the problem and for his fruitful discussions.
I would like to thank my advisor, Amritanshu Prasad for his constant support, encouragement and for his valuable comments on the manuscript.
I would also like to thank G. Arunkumar, R. Balasubramanian, Anup Dixit, Jyotirmoy Ganguly, Subhajit Ghosh, S. Sundar, Sankaran Viswanath for their encouragement and insights.

\bibliographystyle{abbrv}
\bibliography{refs}

\begin{thebibliography}{1}

\bibitem{Hung}
N.~N. Hung, A.~Moret\'o, and L.~Morotti.
\newblock Common zeros of irreducible characters.
\newblock {\em J. Aust. Math. Soc.}, 117(2):105--129, 2024.

\bibitem{Isaacs_I_Martin}
I.~M. Isaacs.
\newblock {\em Character theory of finite groups}.
\newblock AMS Chelsea Publishing, Providence, RI, 2006.
\newblock Corrected reprint of the 1976 original [Academic Press, New York;
  MR0460423].

\bibitem{James_Kerber}
G.~James and A.~Kerber.
\newblock {\em The representation theory of the symmetric group}, volume~16 of
  {\em Encyclopedia of Mathematics and its Applications}.
\newblock Addison-Wesley Publishing Co., Reading, MA, 1981.
\newblock With a foreword by P. M. Cohn, With an introduction by Gilbert de B.
  Robinson.

\bibitem{Frieder_Ladisch}
F.~Ladisch.
\newblock Groups with anticentral elements.
\newblock {\em Comm. Algebra}, 36(8):2883--2894, 2008.

\bibitem{Mark_Lewis_L}
M.~L. Lewis.
\newblock Camina groups, {C}amina pairs, and generalizations.
\newblock In {\em Group theory and computation}, Indian Stat. Inst. Ser., pages
  141--173. Springer, Singapore, 2018.

\bibitem{Mark_Lucia_Emanuele_Lucia_Hung}
M.~L. Lewis, L.~Morotti, E.~Pacifici, L.~Sanus, and H.~P. Tong-Viet.
\newblock On common zeros of characters of finite groups.
\newblock {\em Algebras and Representation Theory}, 2025.

\bibitem{MR0002127}
D.~E. Littlewood.
\newblock {\em The {T}heory of {G}roup {C}haracters and {M}atrix
  {R}epresentations of {G}roups}.
\newblock Oxford University Press, New York, 1940.

\bibitem{Navarro_book}
G.~Navarro.
\newblock {\em Character theory and the {M}c{K}ay conjecture}, volume 175 of
  {\em Cambridge Studies in Advanced Mathematics}.
\newblock Cambridge University Press, Cambridge, 2018.

\end{thebibliography}

\end{document}